\numberwithin{equation}{section}
\theoremstyle{plain}\newtheorem{theorem}{Theorem}[section]\newtheorem{lemma}[theorem]{Lemma}
\theoremstyle{definition}\newtheorem{remark}[theorem]{Remark}\DeclareMathOperator{\mdim}{\mathrm{mdim}}
\DeclareMathOperator{\Widim}{\mathrm{Widim}}\DeclareMathOperator{\Sym}{\mathrm{Sym}}\DeclareMathOperator{\Map}{\mathrm{Map}}
\begin{document}
\title[Mean dimension of product spaces: a fundamental formula]{Mean dimension of product spaces:\\A fundamental formula}
\author[Lei Jin]{Lei Jin}
\address{Lei Jin: School of Mathematics, Sun Yat-sen University, Guangzhou, China}
\email{jinleim@mail.ustc.edu.cn}
%\email{jinleim@impan.pl}
\author[Yixiao Qiao]{Yixiao Qiao}
\address{Yixiao Qiao (Corresponding author): School of Mathematics and Statistics, Guangdong University of Technology, Guangzhou, China}
\email{yxqiao@mail.ustc.edu.cn}
\subjclass[2010]{37B99; 54F45.}\keywords{Mean dimension; Product space.}
\begin{abstract}
Mean dimension is a topological invariant of dynamical systems, which originates with Mikhail Gromov in 1999 and which was studied with deep applications around 2000 by Elon Lindenstrauss and Benjamin Weiss within the framework of amenable group actions. Let a countable discrete amenable group $G$ act continuously on compact metrizable spaces $X$ and $Y$. Consider the product action of $G$ on the product space $X\times Y$. The product inequality for mean dimension is well known: $\mdim(X\times Y,G)\le\mdim(X,G)+\mdim(Y,G)$, while it was unknown for a long time if the product inequality could be an equality. In 2019, Masaki Tsukamoto constructed the first example of two different continuous actions of $G$ on compact metrizable spaces $X$ and $Y$, respectively, such that the product inequality becomes strict. However, there is still one longstanding problem which remains open in this direction, asking if there exists a continuous action of $G$ on some compact metrizable space $X$ such that $\mdim(X\times X,G)<2\cdot\mdim(X,G)$. We solve this problem. Somewhat surprisingly, we prove, in contrast to (topological) dimension theory, a rather satisfactory theorem: If an infinite (countable discrete) amenable group $G$ acts continuously on a compact metrizable space $X$, then we have $\mdim(X^n,G)=n\cdot\mdim(X,G)$, for any positive integer $n$. Our product formula for mean dimension, together with the example and inequality (stated previously), eventually allows mean dimension of product actions to be fully understood.
\end{abstract}
\maketitle

\medskip

\section{Main result}
Mean dimension is a topological invariant of dynamical systems, which originates with Mikhail Gromov in 1999 and which was investigated with deep applications around 2000 by Elon Lindenstrauss and Benjamin Weiss within the framework of amenable group actions. The purpose of this paper is to establish a fundamental formula for \textit{mean dimension of product actions}. We shall state our main theorem very quickly in this section (Section 1). The definition of mean dimension and all the necessary terminologies can be found in Section 2. The proof of the main result is located in Section 3.

\medskip

Let us start with convention. Throughout this paper the symbol $\mathbb{N}$ will denote the set of positive integers. All acting groups are always assumed to be \textit{countable} and \textit{discrete}. If an amenable group $G$ acts continuously on a compact metrizable space $X$ then we denote its mean dimension by $\mdim(X,G)$, which takes values in $[0,+\infty]$.

\medskip

Let an amenable group $G$ act continuously on compact metrizable spaces $X_i$, respectively, where $i$ ranges over some subset $I$ of $\mathbb{N}$. Consider the product action of $G$ on the product space $\prod_{i\in I}X_i$. The product inequality for mean dimension (due to Lindenstrauss and Weiss \cite{LW}) is well known: $$\mdim(\prod_{i\in I}X_i,G)\le\sum_{i\in I}\mdim(X_i,G).$$ Nevertheless, it was unknown for a long time if the product inequality could always be an equality. In 2019, Masaki Tsukamoto \cite{Tsukamoto} successfully constructed the first example of two \textit{different} continuous actions of $G$ on compact metrizable spaces $X$ and $Y$, respectively, such that the product inequality becomes strict: $$\mdim(X\times Y,G)<\mdim(X,G)+\mdim(Y,G).$$ A serious reader may observe that in order to have a full understanding of mean dimension of product actions, there is still one longstanding issue that remains open, asking if it is possible for the two continuous actions $(X,G)$ and $(Y,G)$ (mentioned in the above example) to be \textit{essentially the same}. Formally, we study the problem in this direction as follows:
\begin{itemize}\item
For an (arbitrarily fixed) amenable group $G$, does there exist a continuous action of $G$ on some compact metrizable space $X$ such that $$\mdim(X\times X,G)<2\cdot\mdim(X,G)?$$
\end{itemize}
We solve this problem completely.

\medskip

First of all, let us make two observations on this issue here. On the one hand, we note that in dimension theory there is an example (to be precise, we refer to Lemma \ref{dimproduct}) of a compact metrizable space $K$ of (topological) dimension $\dim(K)$ so that the product space $K\times K$ satisfies $$\dim(K\times K)<2\cdot\dim(K).$$ On the other hand, we have made mention of Tsukamoto's example which is highly similar to such a classically known analogue that takes place in dimension theory.

Apparently, both of these two notable phenomena lead naturally to a seemingly plausible impression, i.e. it would be true that we could finally find a compact metrizable space $X$ (with a continuous action of $G$ on $X$) satisfying that $\mdim(X\times X,G)$ is strictly less than $2\cdot\mdim(X,G)$, as the (former) example in dimension theory would stimulate us to strengthen the (latter) construction of Tsukamoto's example in mean dimension theory with the help of some sufficiently refined method (which seems to be hopeful and which might be technically difficult).

However, this assertion turns out to be \textit{false}. Somewhat surprisingly, we prove a rather satisfactory theorem:

\medskip

\begin{theorem}[Main theorem]\label{main}
If an infinite amenable group $G$ acts continuously on a compact metrizable space $X$, then we have $$\mdim(X^n,G)=n\cdot\mdim(X,G)$$ for all positive integers $n$.
\end{theorem}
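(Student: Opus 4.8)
The plan is to establish the nontrivial inequality $\mdim(X^n,G)\ge n\cdot\mdim(X,G)$; combined with the product inequality of Lindenstrauss and Weiss recalled above, this yields the asserted equality. For a finite set $F\subseteq G$ write $d_F(x,y)=\max_{g\in F}d(gx,gy)$ for the dynamical pseudometric on $X$, and $D_F$ for the analogous pseudometric on $X^n$ built from the product action; a direct computation gives $D_F(u,v)=\max_{1\le i\le n}d_F(u_i,v_i)$, so that $(X^n,D_F)$ is isometric to the $n$-fold max-product of $(X,d_F)$. I will use that mean dimension is computed as $\mdim(X,G)=\lim_{\epsilon\to0}\mdim_\epsilon(X,G)$, where $\mdim_\epsilon(X,G)=\lim_{F}\Widim_\epsilon(X,d_F)/|F|$, the limit being taken along a F\o lner sequence and existing because $F\mapsto\Widim_\epsilon(X,d_F)$ is monotone, $G$-invariant and subadditive (via the diagonal embedding $(X,d_{F_1\cup F_2})\hookrightarrow(X,d_{F_1})\times(X,d_{F_2})$ and subadditivity of width dimension under products) together with the Ornstein--Weiss lemma.

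The heart of the argument is an isometric embedding that trades $n$ \emph{spatial} copies for a single copy observed over an $n$ times longer \emph{time} window. Fix a F\o lner set $F$ and choose group elements $g_1,\dots,g_n\in G$ so that the \emph{right} translates $Fg_1,\dots,Fg_n$ are pairwise disjoint; set $E=\bigsqcup_{i=1}^n Fg_i$, so that $|E|=n|F|$. Define $\phi:X\to X^n$ by $\phi(x)=(g_1x,\dots,g_nx)$. Then
\[
D_F(\phi(x),\phi(y))=\max_{1\le i\le n}\max_{g\in F}d(gg_ix,gg_iy)=\max_{h\in E}d(hx,hy)=d_E(x,y),
\]
so $\phi$ is an isometry of $(X,d_E)$ onto its image in $(X^n,D_F)$. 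Since $\epsilon$-width dimension is monotone under passing to subspaces, this gives $\Widim_\epsilon(X^n,D_F)\ge\Widim_\epsilon(X,d_E)$, whence
\[
\frac{\Widim_\epsilon(X^n,D_F)}{|F|}\;\ge\;\frac{\Widim_\epsilon(X,d_E)}{|F|}\;=\;n\cdot\frac{\Widim_\epsilon(X,d_E)}{|E|}.
\]

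To finish I would let $F$ range over a F\o lner sequence $(F_k)$ and, using that $G$ is infinite, select $g_i=g_i^{(k)}$ not merely disjoint but $K_k$-separated for a finite exhaustion $K_k\uparrow G$, i.e. $K_kFg_i^{(k)}\cap K_kFg_j^{(k)}=\emptyset$ for $i\ne j$; each such requirement forbids $g_i^{(k)}\bigl(g_j^{(k)}\bigr)^{-1}$ from lying in a fixed finite subset of $G$, hence is satisfiable precisely because $G$ is infinite. Because right translates are used, for $s\in K_k$ one has $|sFg_i^{(k)}\,\triangle\,Fg_i^{(k)}|=|sF\,\triangle\,F|$ with no conjugate appearing, so $|sE_k\,\triangle\,E_k|=n\,|sF_k\,\triangle\,F_k|$ and $(E_k)$ is again a F\o lner sequence. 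Consequently $\Widim_\epsilon(X,d_{E_k})/|E_k|\to\mdim_\epsilon(X,G)$, and letting $k\to\infty$ and then $\epsilon\to0$ in the displayed inequality gives $\mdim(X^n,G)\ge n\cdot\mdim(X,G)$.

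I expect the main obstacle to be exactly the verification that the spread-out set $E_k$ is genuinely F\o lner, so that $\Widim_\epsilon(X,d_{E_k})/|E_k|$ converges to $\mdim_\epsilon(X,G)$ rather than to some smaller accidental limit; this is also the step where the hypotheses bite. Using right rather than left translates is essential for a general, possibly non-abelian, amenable $G$, since it removes the conjugates $\bigl(g_i^{(k)}\bigr)^{-1}sg_i^{(k)}$ that would otherwise wreck almost-invariance, while the infiniteness of $G$ is what lets the $n$ clusters be pushed arbitrarily far apart. Conceptually, this embedding is what circumvents the dimension-theoretic failure of additivity recorded in Lemma \ref{dimproduct}: width dimension need not be additive under products at a fixed scale, yet the product \emph{formula} is recovered after dynamical averaging, because spatial copies are converted into temporal ones, which add up linearly by the very definition of mean dimension.
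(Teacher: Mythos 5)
Your proof is correct, and it takes a genuinely different route from the paper's. The paper never touches F\o lner sets directly: it passes to sofic mean dimension, manufactures from a sofic approximation sequence $\Sigma=\{\sigma_i:G\to\Sym(d_i)\}_{i\in\mathbb{N}}$ a new sequence $\Sigma^{(n)}$ on $[nd_i]$ by blowing each index up into a block of length $n$, proves the key identity $\mdim_{\Sigma^{(n)}}(X,G)=\frac{1}{n}\cdot\mdim_\Sigma(X^n,G)$ (Lemma \ref{submain}) by observing that under the identification $X^{nd_i}=(X^n)^{d_i}$ the space $\Map(\rho,F,\delta,\sigma_i^{(n)})$ is sandwiched between $\Map(\rho^{(n)},F,\delta,\sigma_i)$ and $\Map(\rho^{(n)},F,\sqrt{n}\delta,\sigma_i)$, and then quotes Li's theorem (Lemma \ref{lithm}) that for an infinite amenable group the sofic mean dimension with respect to \emph{any} sofic approximation sequence equals $\mdim(X,G)$. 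Your argument stays entirely inside the amenable definition: the isometry $x\mapsto(g_1x,\dots,g_nx)$ of $(X,d_E)$ onto its image in $(X^n,D_F)$, with $E=\bigsqcup_i Fg_i$ a disjoint union of right translates, plus the Ornstein--Weiss fact that $\lim_k\Widim_\epsilon(X,d_{E_k})/|E_k|$ does not depend on the choice of F\o lner sequence. The two proofs share the same mechanism --- trading $n$ spatial copies for an $n$ times longer observation window, with infiniteness of $G$ providing the room --- and indeed the paper's closing remark sketches exactly your embedding in the $\mathbb{Z}$-case ($x\mapsto(x,T^Nx)$ is distance-preserving from $(X,d_{2N})$ to $(X\times X,(d\times d)_N)$), but it resolves the general case through soficity rather than by proving, as you do, that the spread-out sets $E_k$ are again F\o lner. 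What your route buys is economy: the only nontrivial input is the Ornstein--Weiss lemma, which is already needed for $\mdim$ to be well defined, whereas Li's theorem is a much heavier tool. What the paper's route buys is that its key step is more general: Lemma \ref{submain} holds for every sofic group and every sofic approximation sequence, so all use of amenability and infiniteness is isolated in the single citation of Lemma \ref{lithm}.

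Two minor points on your write-up. First, the $K_k$-separation is superfluous: pairwise disjointness of the sets $F_kg_i^{(k)}$ already gives $|E_k|=n|F_k|$, and since $sE_k\triangle E_k\subseteq\bigcup_{i}\bigl(sF_kg_i^{(k)}\triangle F_kg_i^{(k)}\bigr)$ one always has $|sE_k\triangle E_k|\le n\,|sF_k\triangle F_k|$, which is all the F\o lner property of $(E_k)$ requires (the separation merely upgrades this inequality to an equality). Second, the invariance of $F\mapsto\Widim_\epsilon(X,d_F)$ fed into Ornstein--Weiss should be stated as \emph{right} invariance, $\Widim_\epsilon(X,d_{Fg})=\Widim_\epsilon(X,d_F)$, witnessed by the isometry $x\mapsto gx$; this is the version that pairs correctly with left F\o lner sequences, and it is the same phenomenon as your (correct) insistence that the translates $Fg_i$ be taken on the right.
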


\medskip

\begin{remark}
Theorem \ref{main} also applies to $n\in\mathbb{N}\cup\{0\}\cup\{+\infty\}$ provided $0\cdot(+\infty)=(+\infty)\cdot0=0$. Indeed, this statement is obviously correct for $n=0$ if we set $X^0$ to be the one-point set. Moreover, with a slightly more effort assuming Theorem \ref{main} for any $n\in\mathbb{N}$ we are able to show that the statement is true for $n=+\infty$. In fact, there are two cases. If $\mdim(X,G)=0$, then $\mdim(X^\infty,G)=0$ follows directly from the product inequality for mean dimension. Now we suppose $\mdim(X,G)>0$. Since it is clear that $\mdim(X^\infty,G)\ge\mdim(X^n,G)=n\cdot\mdim(X,G)$ for every $n\in\mathbb{N}$ (by definition and by the statement of Theorem \ref{main} for all $n\in\mathbb{N}$), we have $\mdim(X^\infty,G)=+\infty$.
\end{remark}

\begin{remark}
If $G$ is a finite group (which is automatically amenable) then Theorem \ref{main} may be false. Notice that in this case we have by definition $\mdim(X,G)=\dim(X)/|G|$. As follows is an entire picture of the situation: If $X$ satisfies $\dim(X)=+\infty$, then $\mdim(X,G)=+\infty$. So does any of its self-product. Thus, the statement remains true for every $n\in\mathbb{N}\cup\{0,+\infty\}$ in this case. Now let us suppose that $X$ is finite dimensional. It follows from Lemma \ref{dimproduct} that for each $n\in\mathbb{N}$ \begin{align*}\mdim(X^n,G)=\begin{cases}n\cdot\mdim(X,G),&\text{if $\dim(X\times X)=2\dim(X)$}\\n\cdot\mdim(X,G)-(n-1)/|G|,&\text{otherwise}\end{cases}.\end{align*} Thus, in this case the statement fails if and only if $X$ does not satisfy $\dim(X\times X)=2\dim(X)$ and meanwhile $n$ does not belong to $\{0,1,+\infty\}$. In short, the \textit{exact range} to which the statement of Theorem \ref{main} does \textit{not} apply is where $G$ is a finite group, $X$ satisfies $\dim(X\times X)<2\dim(X)$, and $n\in\mathbb{N}\setminus\{1\}$.
\end{remark}

\medskip

In contrast to dimension theory, Theorem \ref{main} enables an unexpected behaviour in mean dimension theory to become clarified. Furthermore, our main theorem, together with Lindenstrauss--Weiss' inequality and Tsukamoto's example (stated previously), eventually allows mean dimension of product actions to be fully understood.

Our result is new even for $\mathbb{Z}$-actions. A novel point of the theorem is that the statement applies to the context of amenable group actions, whereas the proof goes through the framework of its sofic nature. The key ingredient of our idea is to produce \textit{different} sofic approximation sequences for the acting group, with respect to which, we consider the sofic mean dimension of a group action.

\medskip

\section{A brief review of mean dimension}
Both mean dimension and sofic groups originate with Misha Gromov around 1999. A systematic study of mean dimension in the context of amenable group actions was given around 2000 by Lindenstrauss and Weiss \cite{LW}. In 2013, Hanfeng Li \cite{Li} introduced the notion of sofic mean dimension which is a successful extension of the definition of mean dimension to the setting of sofic group actions, and further, Li built its connection with classical mean dimension. This section is devoted to all the precise notions and notations in relation to our result, and to collecting fundamental material on them.

\subsection{Sofic groups}
We denote by $|F|$ the cardinality of a set $F$. For every $d\in\mathbb{N}$ we write $[d]$ for the set $\{k\in\mathbb{N}:1\le k\le d\}$ and $\Sym(d)$ for the group of permutations of $[d]$. A group $G$ is \textbf{sofic} if there is a sequence $$\Sigma=\{\sigma_i:G\to\Sym(d_i)\}_{i\in\mathbb{N}}$$ together with a sequence $\{d_i\}_{i\in\mathbb{N}}\subset\mathbb{N}$ such that the following three conditions are satisfied:
\begin{align*}
&\bullet\quad\quad\lim_{i\to\infty}\frac{1}{d_i}|\{k\in[d_i]:\sigma_i(st)(k)=\sigma_i(s)\sigma_i(t)(k)\}|=1\quad\text{for all $s,t\in G$;}\\
&\bullet\quad\quad\lim_{i\to\infty}\frac{1}{d_i}|\{k\in[d_i]:\sigma_i(s)(k)\ne\sigma_i(t)(k)\}|=1\quad\text{for all distinct $s,t\in G$;}\\
&\bullet\quad\quad\lim_{i\to\infty}d_i=+\infty.
\end{align*}
Such a sequence $\Sigma$ is called a \textbf{sofic approximation sequence} for $G$.
\begin{remark}
Note that the third condition will be fulfilled automatically if $G$ is an infinite group.
\end{remark}
\begin{remark}
The sofic groups are a fairly extensive class, which contain in particular all amenable groups and all residually finite groups. However, it has not yet been confirmed if there exists a non-sofic group.
\end{remark}

\subsection{Product actions}
Let $G$ be a group. By the terminology ``$G$ \textbf{acts continuously on} a compact metrizable space $X$'' we understand a continuous mapping $$\Phi:G\times X\to X,\quad(g,x)\mapsto gx$$ satisfying $$\Phi(e,x)=x,\quad\Phi(gh,x)=\Phi(g,\Phi(h,x)),\quad\forall x\in X,\;\forall g,h\in G,$$ where $e$ is the identity element of the group $G$.

Let a group $G$ act continuously on compact metrizable spaces $X_n$, respectively, where $n$ ranges over some $R\in\{[r]:r\in\mathbb{N}\}\cup\{\mathbb{N}\}$. The \textbf{product action} of $G$ on the product space $\prod_{n\in R}X_n$ is defined as follows: $$g(x_n)_{n\in R}=(gx_n)_{n\in R},\quad\forall g\in G,\;\forall(x_n)_{n\in R}\in\prod_{n\in R}X_n.$$

\subsection{Dimension}
We denote by $\dim(K)$ the topological dimension (i.e. the Lebesgue covering dimension) of a compact metrizable space $K$. If the space $K$ is empty, then we set $\dim(K)=-\infty$. For a finite dimensional (nonempty) compact metrizable space $K$, since it was classically known that $$2\dim(K)-1\le\dim(K\times K)\le2\dim(K)$$ and since $\dim(K)$ must be a nonnegative integer, we have
\begin{itemize}\item either $\quad\dim(K\times K)=2\dim(K)$,\item or $\quad\dim(K\times K)=2\dim(K)-1$.\end{itemize}
For a friendly treatment of the following result in dimension theory we refer to \cite[Theorem 2.5]{Tsukamoto}.
\begin{lemma}\label{dimproduct}
Let $K$ be a finite dimensional compact metrizable space. Then for every $n\in\mathbb{N}$
$$\dim(K^n)=\begin{cases}n\dim(K),&\text{if $K$ satisfies $\dim(K\times K)=2\dim(K)$},\\n\dim(K)-n+1,&\text{otherwise}.\end{cases}$$
\end{lemma}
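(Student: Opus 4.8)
The plan is to pass from covering dimension to cohomological dimension and to exploit the contrast between the behaviour of the latter over a field and over the integers. Since $K$ is a finite dimensional compact metrizable space, its covering dimension agrees with its integral cohomological dimension, $\dim(K^{n})=\dim_{\mathbb{Z}}(K^{n})$, so I would work throughout with $\dim_{\mathbb{Z}}$ and its Bockstein companions $\dim_{\mathbb{Q}}$, $\dim_{\mathbb{Z}_{p}}$ and $\dim_{\mathbb{Z}_{(p)}}$. Two facts drive the argument. First, cohomological dimension is additive under products over any field, $\dim_{F}(X\times Y)=\dim_{F}(X)+\dim_{F}(Y)$ for $F\in\{\mathbb{Q}\}\cup\{\mathbb{Z}_{p}:p\ \text{prime}\}$, which is a Künneth computation. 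Second, the integral dimension is recovered through the Bockstein decomposition $\dim_{\mathbb{Z}}=\max\bigl(\dim_{\mathbb{Q}},\,\sup_{p}\dim_{\mathbb{Z}_{(p)}}\bigr)$, together with the Bockstein inequalities $\dim_{\mathbb{Z}_{p}}\le\dim_{\mathbb{Z}_{(p)}}\le\dim_{\mathbb{Z}_{p}}+1$. Writing $d=\dim(K)$ and letting $m=\max\bigl(\dim_{\mathbb{Q}}(K),\sup_{p}\dim_{\mathbb{Z}_{p}}(K)\bigr)$ denote the largest \emph{field} cohomological dimension of $K$, we have $m\le d\le m+1$, so exactly one of $d=m$ or $d=m+1$ holds; I would split the proof accordingly and verify at the end that these two alternatives are precisely $\dim(K\times K)=2d$ and $\dim(K\times K)=2d-1$.

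If $d=m$, the top dimension is witnessed by a field, say $\dim_{F}(K)=d$. Additivity over $F$ then gives $\dim_{F}(K^{n})=nd$, whence $\dim(K^{n})\ge\dim_{F}(K^{n})=nd$; combined with the classical product inequality (subadditivity of covering dimension) $\dim(K^{n})\le nd$ this yields $\dim(K^{n})=nd$. In particular $\dim(K\times K)=2d$, so this is the full-valued alternative of the lemma.

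The substantive case is $d=m+1$, where no field attains the dimension $d$, so $\dim_{\mathbb{Z}_{p}}(K)\le d-1$ and $\dim_{\mathbb{Q}}(K)\le d-1$ for all $p$, while the extra unit of dimension is carried by a single prime $p_{0}$ with $\dim_{\mathbb{Z}_{p_{0}}}(K)=d-1$ and $\dim_{\mathbb{Z}_{(p_{0})}}(K)=d$; the defect $\delta_{p}:=\dim_{\mathbb{Z}_{(p)}}(K)-\dim_{\mathbb{Z}_{p}}(K)$ lies in $\{0,1\}$ by the Bockstein inequalities, and the cohomological content of the already-stated bound $\dim(K\times K)\ge 2d-1$ is exactly that it cannot exceed $1$. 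The engine is the Bockstein product theorem for cohomological dimension applied to $K^{n}=K\times K^{n-1}$: its mixed terms pair the localized dimension $\dim_{\mathbb{Z}_{(p)}}$ of one factor against the field dimension $\dim_{\mathbb{Z}_{p}}$ of the other, with the effect that the torsion-borne unit of dimension is added once and only once along the whole tower, so that an induction delivers $\dim_{\mathbb{Z}_{(p)}}(K^{n})=n\,\dim_{\mathbb{Z}_{p}}(K)+\delta_{p}$. Feeding this and $\mathbb{Q}$-additivity into the Bockstein decomposition gives
\[
\dim(K^{n})=\dim_{\mathbb{Z}}(K^{n})=\max\Bigl(n\dim_{\mathbb{Q}}(K),\ \sup_{p}\bigl(n\dim_{\mathbb{Z}_{p}}(K)+\delta_{p}\bigr)\Bigr).
\]
The term at $p_{0}$ equals $n(d-1)+1=nd-(n-1)$, and every other term is at most this (because $\dim_{\mathbb{Z}_{p}}(K)\le d-1$ and $\delta_{p}\le 1$, and $n\dim_{\mathbb{Q}}(K)\le n(d-1)$). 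Hence $\dim(K^{n})=nd-(n-1)$, which for $n=2$ returns $\dim(K\times K)=2d-1$ and matches the second alternative.

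I expect the main obstacle to be the Bockstein product theorem itself. The field-additivity used in the first case is an elementary Künneth argument and is responsible for full-valuedness, but the defective case hinges on the assertion that the single extra unit of dimension present at the prime $p_{0}$ \emph{persists} through every self-product: field-additivity alone only gives $\dim_{\mathbb{Z}_{p_{0}}}(K^{n})=n(d-1)$, one short of the target, and the Bockstein inequality $\dim_{\mathbb{Z}_{(p_{0})}}\ge\dim_{\mathbb{Z}_{p_{0}}}$ is too weak to recover the missing unit for the product. Establishing that the gap neither disappears nor accumulates, i.e. that $\dim_{\mathbb{Z}_{(p_{0})}}(K^{n})$ is exactly $n(d-1)+1$, is where the fine cohomological-dimension technology for compacta (universal coefficients, $\mathrm{Ext}$-phenomena, and the genuinely non-polyhedral inverse-limit behaviour underlying Pontryagin-type examples) has to be invoked. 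This is also why the crude bounds $2\dim(K)-1\le\dim(K\times K)\le 2\dim(K)$ together with subadditivity cannot by themselves close the argument: subadditivity gives only $\dim(K^{3})\le 3d-1$ in the defective case, one more than the sharp value $3d-2$.
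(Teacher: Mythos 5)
Your overall route---replace covering dimension by $\dim_{\mathbb{Z}}$, split according to whether some field coefficient witnesses $\dim(K)$, use K\"unneth additivity over fields in the full-valued case and Bockstein product machinery in the defective case---is the standard one, and it is the route behind the proof this paper points to (the paper itself gives no proof of Lemma \ref{dimproduct}; it defers to \cite[Theorem 2.5]{Tsukamoto}, whose argument is exactly this Bockstein-theoretic one). Your first case is correct and complete. The defective case, however, rests on two statements that are false. First, $\dim_{\mathbb{Z}_p}\le\dim_{\mathbb{Z}_{(p)}}\le\dim_{\mathbb{Z}_p}+1$ is not a Bockstein inequality: the correct upper bound is $\dim_{\mathbb{Z}_{(p)}}X\le\max\{\dim_{\mathbb{Q}}X,\ \dim_{\mathbb{Z}_{p^{\infty}}}X+1\}$, where $\mathbb{Z}_{p^{\infty}}=\mathbb{Z}[1/p]/\mathbb{Z}$ is the Pr\"ufer group---the fourth member of the Bockstein family, which is entirely absent from your setup. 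There is no inequality bounding $\dim_{\mathbb{Q}}$ from above by mod-$p$ data (inverse limits of spheres under degree-$p$ bonding maps have $\dim_{\mathbb{Q}}$ large and $\dim_{\mathbb{Z}_p}$ small), so your claim $\delta_p\in\{0,1\}$ fails in general; by the realization theorem it can even fail at a non-critical prime of a compactum lying in your second case. Second, and more seriously, your declared engine $\dim_{\mathbb{Z}_{(p)}}(K^{n})=n\dim_{\mathbb{Z}_p}(K)+\delta_p$ is false: whenever $\dim_{\mathbb{Q}}K>\dim_{\mathbb{Z}_p}K$ one has $\dim_{\mathbb{Z}_{(p)}}(K^{n})\ge\dim_{\mathbb{Q}}(K^{n})=n\dim_{\mathbb{Q}}K$, which outruns $n\dim_{\mathbb{Z}_p}K+\delta_p$ already for $n=2$. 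The genuine Bockstein product theorem expresses $\dim_{\mathbb{Z}_{(p)}}(X\times Y)$ as a maximum of four terms whose mixed terms pair $\dim_{\mathbb{Z}_{(p)}}$ of one factor with $\dim_{\mathbb{Z}_{p^{\infty}}}$ (not $\dim_{\mathbb{Z}_p}$) of the other; it cannot even be stated in your truncated family $\{\mathbb{Q},\mathbb{Z}_p,\mathbb{Z}_{(p)}\}$.

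The conclusion you reach is nevertheless correct, and the skeleton is repairable, but only by importing the theorem you mis-stated. At the critical prime $p_{0}$ (where $\dim_{\mathbb{Z}_{(p_{0})}}K=d$ while every field dimension is at most $d-1$), the correct inequality forces $\dim_{\mathbb{Z}_{p_{0}^{\infty}}}K=\dim_{\mathbb{Z}_{p_{0}}}K=d-1$, and then induction through the true product formula yields $\dim_{\mathbb{Z}_{(p_{0})}}(K^{n})=n(d-1)+1$; that is the lower bound, and it is exactly the Pr\"ufer-coefficient term $\dim_{\mathbb{Z}_{(p_{0})}}K+\dim_{\mathbb{Z}_{p_{0}^{\infty}}}(K^{n-1})$ that produces the persistent ``$+1$''. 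For the upper bound one must prove $\dim_{\mathbb{Z}_{(p)}}(K^{n})\le n(d-1)+1$ for \emph{every} prime $p$, not only $p_{0}$; your per-prime formula would give this but is false (the true values can be as large as $n\dim_{\mathbb{Q}}K$), and the bound instead follows by induction from the four-term product formula using $\dim_{\mathbb{Z}_{(p)}}K\le d$ together with the bound $d-1$ on all $\mathbb{Q}$-, $\mathbb{Z}_p$- and $\mathbb{Z}_{p^{\infty}}$-dimensions. So the gap is concrete: the two displayed ``Bockstein'' facts your defective case runs on are not true, and once they are replaced by the correct ones, the argument is no longer yours but the cited one.
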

Let $X$ and $P$ be two compact metrizable spaces. Let $\rho$ be a compatible metric on $X$. For $\epsilon>0$ a continuous mapping $f:X\to P$ is called an \textbf{$\epsilon$-embedding} with respect to $\rho$ if $f(x)=f(x^\prime)$ implies $\rho(x,x^\prime)<\epsilon$, for all $x,x^\prime\in X$. Let $\Widim_\epsilon(X,\rho)$ be the minimum topological dimension $\dim(P)$ of a compact metrizable space $P$ which admits an $\epsilon$-embedding $f:X\to P$ with respect to $\rho$.
\begin{remark}
We may verify that the topological dimension of $X$ may be recovered by $\dim(X)=\lim_{\epsilon\to0}\Widim_\epsilon(X,\rho)$.
\end{remark}
Let $K$ be a compact metrizable space with a compatible metric $\rho$. For every $n\in\mathbb{N}$ we define on the product space $K^n$ two compatible metrics $\rho_2$ and $\rho_\infty$ as follows: $$\rho_2\left((x_i)_{i\in[n]},(y_i)_{i\in[n]}\right)=\sqrt{\frac1n\sum_{i\in[n]}(\rho(x_i,y_i))^2},$$ $$\rho_\infty\left((x_i)_{i\in[n]},(y_i)_{i\in[n]}\right)=\max_{i\in[n]}\rho(x_i,y_i).$$ We do not include $n\in\mathbb{N}$ in the notations $\rho_2$ and $\rho_\infty$ because it does not cause any ambiguity.

\subsection{Mean dimension}
A group $G$ is \textbf{amenable} if there exists a sequence $\{F_n\}_{n\in\mathbb{N}}$ of nonempty finite subsets of $G$ such that for any $g\in G$
$$\lim_{n\to\infty}\frac{|F_n\triangle gF_n|}{|F_n|}=0.$$
Such a sequence $\{F_n\}_{n\in\mathbb{N}}$ is called a \textbf{F\o lner sequence} of the group $G$.

Let an amenable group $G$ act continuously on a compact metrizable space $X$. Take a F\o lner sequence $\{F_n\}_{n\in\mathbb{N}}$ of $G$ and a compatible metric $\rho$ on $X$. For a nonempty finite subset $F$ of $G$ we set $$\rho_F(x,x^\prime)=\rho_\infty\left((gx)_{g\in F},(gx^\prime)_{g\in F}\right),\quad\forall\,x,x^\prime\in X.$$ It is clear that $\rho_F$ is also a compatible metric on $X$. The \textbf{mean dimension} of $(X,G)$ is defined by $$\mdim(X,G)=\lim_{\epsilon\to0}\lim_{n\to\infty}\frac{\Widim_\epsilon(X,\rho_{F_n})}{|F_n|}.$$ It is well known that the limits in the above definition always exist. The value $\mdim(X,G)$ is independent of the choices of a F\o lner sequence $\{F_n\}_{n\in\mathbb{N}}$ of $G$ and a compatible metric $\rho$ on $X$.

\subsection{Sofic mean dimension}
Suppose that $\Sigma=\{\sigma_i:G\to\Sym(d_i)\}_{i\in\mathbb{N}}$ is a sofic approximation sequence for a sofic group $G$ which acts continuously on a compact metrizable space $X$ equipped with a compatible metric $\rho$. For a finite subset $F$ of $G$, $\delta>0$ and a map $\sigma:G\to\Sym(d)$ (where $d\in\mathbb{N}$) we define $$\Map(\rho,F,\delta,\sigma)=\{\phi:[d]\to X:\rho_2(\phi\circ\sigma(s),s\phi)\le\delta,\,\forall s\in F\}.$$ We consider the set $\Map(\rho,F,\delta,\sigma)$ as a compact subspace of the product space $X^d$. The \textbf{sofic mean dimension} of $(X,G)$ with respect to $\Sigma$ is defined by $$\mdim_\Sigma(X,G)=\sup_{\epsilon>0}\inf_{F\subset G\text{ finite, }\,\delta>0}\limsup_{i\to\infty}\frac{\Widim_\epsilon\left(\Map(\rho,F,\delta,\sigma_i),\rho_\infty\right)}{d_i}.$$ The definition of $\mdim_\Sigma(X,G)$ does not depend on the compatible metrics $\rho$ on $X$. Nevertheless, it is not clear yet if there is an example of a sofic approximation sequence $\Sigma^\prime$ different from $\Sigma$, which leads to a different value $\mdim_{\Sigma^\prime}(X,G)$. We shall make use of the following theorem \cite[Section 3]{Li}.
\begin{lemma}\label{lithm}
If an infinite amenable group $G$ acts continuously on a compact metrizable space $X$ and if $\Sigma$ is a sofic approximation sequence for $G$, then $\mdim_\Sigma(X,G)=\mdim(X,G)$.
\end{lemma}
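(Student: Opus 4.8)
The plan is to prove the equality by establishing the two inequalities $\mdim_\Sigma(X,G)\le\mdim(X,G)$ and $\mdim_\Sigma(X,G)\ge\mdim(X,G)$ separately. Fix a F\o lner sequence $\{F_n\}_{n\in\mathbb{N}}$ of $G$ and a compatible metric $\rho$ on $X$, and recall that $\mdim(X,G)=\lim_{\epsilon\to0}\lim_{n\to\infty}\Widim_\epsilon(X,\rho_{F_n})/|F_n|$, whereas $\mdim_\Sigma(X,G)$ is computed from the model spaces $\Map(\rho,F,\delta,\sigma_i)$. The common bridge between the two definitions is an approximate tiling (an Ornstein--Weiss / Rokhlin-type lemma) for the sofic approximation: given a F\o lner block $F=F_N$ and $\delta$ small, for all large $i$ one can choose a set $W\subset[d_i]$ of centres with $|W|\approx d_i/|F|$ such that the tiles $T_c=\{\sigma_i(g)(c):g\in F\}$ ($c\in W$) are pairwise almost disjoint, cover all but a proportion $o(1)$ of $[d_i]$, and are places on which $\sigma_i$ composes correctly (the two defining conditions of a sofic approximation, together with the infiniteness and amenability of $G$, guarantee this). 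On each tile a configuration $\phi\in\Map(\rho,F,\delta,\sigma_i)$ therefore looks, up to the averaged error controlled by $\delta$, like a genuine orbit segment $(gx_c)_{g\in F}$ of a point $x_c\in X$.

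For the upper bound $\mdim_\Sigma(X,G)\le\mdim(X,G)$ I would run the tiling as a reconstruction. After discarding a further set of ``bad'' tiles of density $o(1)$ on which the averaged compatibility $\rho_2(\phi\circ\sigma_i(s),s\phi)\le\delta$ is atypically large, the value $x_c:=\phi(c)$ determines $\phi$ on each remaining tile up to $\epsilon$ in the $\rho_\infty$ metric, since there the restriction of $\phi$ is $\rho_\infty$-close to the genuine orbit segment of $x_c$. Composing the assignment $\phi\mapsto(x_c)_{c\in W}$ with a product of optimal $\epsilon$-embeddings of $(X,\rho_{F})$, and treating the $o(d_i)$ uncovered coordinates separately (each contributing only the fixed finite quantity $\Widim_\epsilon(X,\rho)$, hence $o(d_i)$ in total), the subadditivity and monotonicity of $\Widim_\epsilon$ give $\Widim_\epsilon(\Map(\rho,F,\delta,\sigma_i),\rho_\infty)\le|W|\cdot\Widim_{\epsilon'}(X,\rho_{F})+o(d_i)$. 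Dividing by $d_i$, letting $i\to\infty$, and then optimizing over $F$, $\delta$ and $\epsilon$ yields the upper bound; this direction is comparatively routine precisely because $\Widim_\epsilon$ is subadditive under products.

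The reverse inequality $\mdim_\Sigma(X,G)\ge\mdim(X,G)$ is where I expect the real difficulty to lie. One is tempted to plant an independent point $x_c\in X$ on each tile, obtaining inside $\Map$ a subspace carrying the metric $\max_{c\in W}\rho_{F}(x_c,y_c)$ on $X^{|W|}$, and to conclude $\Widim_\epsilon(\Map)\ge|W|\cdot\Widim_\epsilon(X,\rho_{F})$. This naive product estimate \emph{fails}, because $\Widim_\epsilon$ is \emph{not} superadditive under products: if it were, then letting $\epsilon\to0$ would force $\dim(X\times X)=2\dim(X)$ for every compact metrizable $X$, contradicting the dimension-theoretic phenomenon recorded in Lemma \ref{dimproduct}. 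Consequently the lower bound must be obtained without treating the tiles as dimensionally independent coordinates. The plan instead is to transport a single near-optimal $\epsilon$-embedding witness for a large F\o lner block coherently through the monotiling, and to show, by a Lebesgue-type essentiality argument adapted to the dynamically arranged tiles, that any $\epsilon$-embedding of the model must remain essential in proportionally $d_i$ many directions; crucially this has to be arranged so as \emph{not} to presuppose the product formula $\mdim(X^n,G)=n\cdot\mdim(X,G)$, which is the very goal of the paper and would otherwise render the argument circular. Establishing this essentiality, and matching the constant to $\lim_{n\to\infty}\Widim_\epsilon(X,\rho_{F_n})/|F_n|$ through the Ornstein--Weiss subadditivity of the F\o lner sequence, is the main obstacle; once it is in place, taking $i\to\infty$ and then $\epsilon\to0$ completes the proof.
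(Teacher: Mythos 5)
First, a point of comparison that matters here: the paper does not prove Lemma \ref{lithm} at all --- it imports it wholesale from \cite[Section 3]{Li} and uses it as a black box (it is one of the two pillars, together with the key lemma on $\Sigma^{(n)}$, on which the main theorem stands). So your attempt has to be measured against Li's Section 3, which is a substantial theorem in its own right. Your sketch gets the geography right: the upper bound is the comparatively routine direction, and you correctly diagnose why the naive lower bound fails --- $\Widim_\epsilon$ is not superadditive under sup-metric products, since otherwise letting $\epsilon\to0$ would contradict Lemma \ref{dimproduct}; this is also exactly why the statement is false for finite groups, so any proof of $\mdim_\Sigma(X,G)\ge\mdim(X,G)$ must use the infiniteness of $G$ in an essential, quantitative way. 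But having located the obstacle, you do not remove it. ``Transport a single near-optimal $\epsilon$-embedding witness coherently through the monotiling'' and ``a Lebesgue-type essentiality argument adapted to the dynamically arranged tiles'' are names for a hoped-for proof, not an argument: no map is constructed, no essentiality lemma is stated, and no mechanism is given by which infiniteness of $G$ enters. Since the lower bound is precisely the content the paper needs from \cite{Li}, this is a genuine gap at the center of the proposal.

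Two subsidiary problems deserve mention. The Rokhlin-type input you posit --- tiles of a single shape $F=F_N$, pairwise almost disjoint, covering all but a proportion $o(1)$ of $[d_i]$ --- is stronger than what the Ornstein--Weiss machinery yields: quasi-tilings come with finitely many F\o lner shapes and only $\epsilon$-disjointness, transferring them into a sofic approximation is itself a nontrivial lemma, and the multi-shape bookkeeping then infects both of your inequalities. In the upper bound, moreover, the set of ``good'' tiles (those on which $\phi$ is $\rho_\infty$-close to a genuine orbit segment) depends on $\phi$, so one cannot compose with a single fixed product of $\epsilon$-embeddings; enumerating the possible bad-tile configurations produces exponentially many pieces in $d_i$, and $\Widim_\epsilon$ behaves badly under unions, so the reconstruction must be organized more carefully than ``subadditivity and monotonicity''. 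Finally, for the hard direction there is a concrete device, consonant with the paper's closing Remark, which your sketch could have taken as a starting point: for infinite $G$ choose $c_1,\dots,c_m$ with $Fc_1,\dots,Fc_m$ pairwise disjoint; then $x\mapsto(c_jx)_{j\in[m]}$ is distance-preserving from $(X,\rho_{F'})$, where $F'=\bigcup_{j\in[m]}Fc_j$, into $(X^m,(\rho_F)_\infty)$, so tiles planted with the points $c_jx$ of a single orbit simulate $\Widim_\epsilon(X,\rho_{F'})$ over the larger set $F'$ instead of an honest product. But $F'$ need not be F\o lner, and comparing $\Widim_\epsilon(X,\rho_{F'})/|F'|$ with the F\o lner limit defining $\mdim(X,G)$ still requires an Ornstein--Weiss-type comparison --- which is the analytic substance of \cite[Section 3]{Li} that your proposal leaves unsupplied.
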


\medskip

\section{Proof of the main theorem}
Let $G$ be an infinite amenable group which acts continuously on a compact metrizable space $X$. We fix a positive integer $n$ in this section. Recall that $(X^n,G)$ denotes the product action of $G$ on the product space $X^n$. We shall prove $$\mdim(X^n,G)=n\cdot\mdim(X,G).$$

\medskip

Since the group $G$ is amenable, it is sofic. Therefore we may take a sofic approximation sequence for $G$: $$\Sigma=\{\sigma_i:G\to\Sym(d_i)\}_{i\in\mathbb{N}},$$ where $\{d_i\}_{i\in\mathbb{N}}$ is a sequence of positive integers with $d_i\to+\infty$ as $i\to+\infty$. We generate a new sofic approximation sequence for $G$ (confirmed below) as follows: $$\Sigma^{(n)}=\{\sigma^{(n)}_i:G\to\Sym(nd_i)\}_{i\in\mathbb{N}},$$ where for every $i\in\mathbb{N}$ the map $$\sigma^{(n)}_i:G\to\Sym(nd_i)$$ is defined by: $$\sigma^{(n)}_i(g)\left((j-1)n+l\right)=(\sigma_i(g)(j)-1)n+l,\,\quad\;\forall g\in G,\;\forall j\in[d_i],\;\forall l\in[n].$$

\medskip

\begin{lemma}
$\Sigma^{(n)}$ is a sofic approximation sequence for $G$.
\end{lemma}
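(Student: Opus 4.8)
The plan is to exploit the product structure built into the definition of $\sigma^{(n)}_i$. First I would record the natural identification of index sets: the map $[d_i]\times[n]\to[nd_i]$ sending $(j,l)\mapsto(j-1)n+l$ is a bijection, and under it the defining formula reads $\sigma^{(n)}_i(g)(j,l)=(\sigma_i(g)(j),l)$. In other words $\sigma^{(n)}_i(g)$ is precisely $\sigma_i(g)\times\mathrm{id}_{[n]}$: it permutes the $d_i$ blocks of size $n$ according to $\sigma_i(g)$ while fixing the within-block coordinate $l$. Since $\sigma_i(g)$ is a permutation of $[d_i]$, this shows at once that $\sigma^{(n)}_i(g)\in\Sym(nd_i)$ is well defined, so that $\Sigma^{(n)}$ is genuinely a sequence of maps into symmetric groups; the third sofic condition $nd_i\to+\infty$ is immediate from $d_i\to+\infty$.

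For the remaining two conditions the key observation is that the relevant index sets are exactly $n$ copies (indexed by $l\in[n]$) of the corresponding index sets for $\Sigma$. For the approximate-multiplicativity condition, a direct computation using the block formula gives, for each $k=(j-1)n+l$, that $\sigma^{(n)}_i(st)(k)=\sigma^{(n)}_i(s)\sigma^{(n)}_i(t)(k)$ holds if and only if $\sigma_i(st)(j)=\sigma_i(s)\sigma_i(t)(j)$; the within-block coordinate $l$ plays no role. Hence the good set for $\sigma^{(n)}_i$ has cardinality exactly $n$ times that of the good set for $\sigma_i$, and dividing by $nd_i$ returns precisely the ratio for $\sigma_i$ divided by $d_i$, which tends to $1$ by the first sofic condition for $\Sigma$. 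An identical argument handles the approximate-freeness condition: for distinct $s,t\in G$ one has $\sigma^{(n)}_i(s)(k)\ne\sigma^{(n)}_i(t)(k)$ if and only if $\sigma_i(s)(j)\ne\sigma_i(t)(j)$, so again the ratio with denominator $nd_i$ equals the ratio for $\Sigma$ with denominator $d_i$ and converges to $1$.

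There is no serious obstacle here; the statement follows from the product structure by elementary counting. The only point requiring care is the bookkeeping of the bijection $[nd_i]\cong[d_i]\times[n]$ and the verification that each of the two asymptotic conditions is preserved \emph{exactly} (not merely up to a constant) once one notes that the factor $n$ appearing in the numerator cancels against the factor $n$ in the denominator $nd_i$. I would therefore present the argument by first establishing the block formula $\sigma^{(n)}_i(g)=\sigma_i(g)\times\mathrm{id}_{[n]}$ as a preliminary observation, and then deriving all three conditions as immediate consequences.
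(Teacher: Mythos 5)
Your proposal is correct and follows essentially the same route as the paper's proof: you verify that each $\sigma^{(n)}_i(g)$ is a permutation and reduce the two asymptotic conditions, block by block via the identification $[nd_i]\cong[d_i]\times[n]$, to the corresponding conditions for $\Sigma$, exactly as the paper does with its two displayed equivalences. Your explicit counting (the good set is enlarged by a factor of $n$, cancelling against the denominator $nd_i$) merely spells out what the paper leaves implicit in ``the assertion follows.''
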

\begin{proof}
Clearly, for every $i\in\mathbb{N}$ and $g\in G$ the map $\sigma^{(n)}_i(g):[nd_i]\to[nd_i]$ is a permutation of $[nd_i]$. Besides, it is straightforward to verify that for any $s,t\in G$, $j\in[d_i]$ and $l\in[n]$ we have $$\sigma^{(n)}_i(st)\left((j-1)n+l\right)=\sigma^{(n)}_i(s)\sigma^{(n)}_i(t)\left((j-1)n+l\right)\iff\sigma_i(st)(j)=\sigma_i(s)\sigma_i(t)(j),$$$$\sigma^{(n)}_i(s)\left((j-1)n+l\right)=\sigma^{(n)}_i(t)\left((j-1)n+l\right)\iff\sigma_i(s)(j)=\sigma_i(t)(j).$$ Since $\Sigma$ is a sofic approximation sequence for $G$, the assertion follows.
\end{proof}

\medskip

Let us consider the sofic mean dimension of $(X,G)$ and $(X^n,G)$ with respect to the sofic approximation sequences $\Sigma^{(n)}$ and $\Sigma$, respectively. These two values share the following relation.

\medskip

\begin{lemma}[Key lemma]\label{submain}
$$\mdim_{\Sigma^{(n)}}(X,G)=\frac1n\cdot\mdim_\Sigma(X^n,G).$$
\end{lemma}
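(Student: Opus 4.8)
The plan is to show that both quantities are computed from literally the \emph{same} sequence of map spaces, sitting inside two canonically identified product spaces, and then to control the (slightly different) metrics used to measure their $\Widim_\epsilon$. The crucial preliminary choice is to equip $X^n$ with the averaged $\ell^2$-product metric, which I denote $\tilde\rho:=\rho_2$ (built from $\rho$ over the $n$ coordinates of $X^n$); this is legitimate because $\mdim_\Sigma(X^n,G)$ does not depend on the chosen compatible metric. First I would set up the canonical bijection $[nd_i]\leftrightarrow[d_i]\times[n]$ coming from $(j-1)n+l\leftrightarrow(j,l)$. It identifies a map $\phi:[nd_i]\to X$ with the map $\Psi:[d_i]\to X^n$ defined by $\Psi(j)=\bigl(\phi((j-1)n+l)\bigr)_{l\in[n]}$, hence identifies the ambient spaces $X^{nd_i}$ and $(X^n)^{d_i}$.

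Next I would verify that, under this identification, the two families of map spaces coincide \emph{exactly}. A direct computation using the definition of $\sigma_i^{(n)}$ together with the coordinatewise product action on $X^n$ shows that the two approximate-equivariance defects agree for every $s\in G$: $$\rho_2\bigl(\phi\circ\sigma_i^{(n)}(s),\,s\phi\bigr)=\tilde\rho_2\bigl(\Psi\circ\sigma_i(s),\,s\Psi\bigr).$$ The mechanism is that the inner normalizing factor $1/n$ carried by $\tilde\rho$ on $X^n$ combines with the outer factor $1/d_i$ to reproduce precisely the factor $1/(nd_i)$ carried by $\rho_2$ over $[nd_i]$; the summands match term by term once the index bijection is applied. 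Consequently, for every finite $F\subset G$ and every $\delta>0$, $$\Map(\rho,F,\delta,\sigma_i^{(n)})=\Map(\tilde\rho,F,\delta,\sigma_i),$$ and I denote this common compact set by $M_i$.

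The remaining, and main, obstacle is that the two ``observable'' metrics entering $\Widim_\epsilon$ are genuinely different: on $X^{nd_i}$ one uses $\rho_\infty$, a single maximum over all $nd_i$ coordinates, whereas on $(X^n)^{d_i}$ one uses $\tilde\rho_\infty$, a maximum over $[d_i]$ of the inner $\ell^2$-averages over $[n]$. A short estimate shows they are bi-Lipschitz equivalent on $M_i$ with constant $\sqrt n$, namely $$\tfrac{1}{\sqrt n}\,\rho_\infty\le\tilde\rho_\infty\le\rho_\infty.$$ Using monotonicity of $\Widim_\epsilon$ in the metric together with the scaling identity $\Widim_\epsilon(\cdot,c\,d)=\Widim_{\epsilon/c}(\cdot,d)$, this yields $$\Widim_{\sqrt n\,\epsilon}(M_i,\rho_\infty)\le\Widim_\epsilon(M_i,\tilde\rho_\infty)\le\Widim_\epsilon(M_i,\rho_\infty).$$ I would then divide by $nd_i$ (so that the normalization $\tfrac1n\cdot\tfrac1{d_i}$ on the right side of the claimed formula matches $\tfrac1{nd_i}$ on the left), and apply in turn $\limsup_{i\to\infty}$, $\inf_{F,\delta}$, and finally $\sup_{\epsilon>0}$. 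The spurious factor $\sqrt n$ inside $\Widim_{\sqrt n\,\epsilon}$ is harmless because the expression is monotone in $\epsilon$, so $\sup_{\epsilon>0}$ is really $\lim_{\epsilon\to0}$ and is unaffected by replacing $\epsilon$ with $\sqrt n\,\epsilon$. This squeezes $\mdim_{\Sigma^{(n)}}(X,G)$ and $\tfrac1n\,\mdim_\Sigma(X^n,G)$ together and gives the asserted equality.
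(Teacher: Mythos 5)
Your proof is correct, and it follows the same overall skeleton as the paper's---identify $X^{nd_i}$ with $(X^n)^{d_i}$ via the index bijection $(j-1)n+l\leftrightarrow(j,l)$, compare map spaces and their $\Widim_\epsilon$, and squeeze after normalizing by $nd_i$---but the two arguments place the unavoidable $\sqrt n$ loss in dual locations. The paper equips $X^n$ with the sup-metric $\rho^{(n)}=\rho_\infty$; then the observable metrics on the two sides agree exactly ($\rho_\infty=\rho^{(n)}_\infty$ under the identification), but the $\ell^2$ defect metrics only satisfy $\tfrac1{\sqrt n}\rho^{(n)}_2\le\rho_2\le\rho^{(n)}_2$, so the map spaces merely nest, $\Map(\rho^{(n)},F,\delta,\sigma_i)\subset\Map(\rho,F,\delta,\sigma^{(n)}_i)\subset\Map(\rho^{(n)},F,\sqrt n\delta,\sigma_i)$, and the $\sqrt n$ is absorbed by the infimum over $\delta$. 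You instead equip $X^n$ with the averaged $\ell^2$ metric $\tilde\rho=\rho_2$; then the defect metrics agree exactly (your computation that $\tfrac1n\cdot\tfrac1{d_i}$ reproduces $\tfrac1{nd_i}$ is right), so the map spaces coincide on the nose for every $F$ and $\delta$---a cleaner statement than the paper's inclusions---but the observable metrics only satisfy $\tfrac1{\sqrt n}\rho_\infty\le\tilde\rho_\infty\le\rho_\infty$, so the $\sqrt n$ reappears as $\Widim_{\sqrt n\,\epsilon}$ and must be absorbed by the supremum over $\epsilon$, which is indeed a harmless reparametrization of $(0,\infty)$ as you note. Both implementations are valid: yours buys literal equality of the map spaces at the cost of a distorted observable metric and the $\epsilon$-scaling identity $\Widim_\epsilon(\cdot,c\,d)=\Widim_{\epsilon/c}(\cdot,d)$; the paper's keeps the observable metric and the parameter $\epsilon$ untouched at the cost of $\delta$-inflated inclusions. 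In both cases the freedom to choose a convenient metric on $X^n$ rests on the metric-independence of $\mdim_\Sigma(X^n,G)$, which you correctly invoke at the outset.
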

\begin{proof}
We fix a compatible metric $\rho$ on $X$ in the proof. Let $\rho^{(n)}$ be the compatible metric $\rho_\infty$ on $X^n$. Let us consider two compact metric spaces as follows: $(X,\rho)$ and $(X^n,\rho^{(n)})$. We take $\epsilon>0$, $\delta>0$, a finite subset $F$ of $G$, and a positive integer $i$, arbitrarily and fix them temporarily.

We note that both of the following two sets: $$\Map(\rho,F,\delta,\sigma^{(n)}_i)=\{\phi:[nd_i]\to X:\rho_2(\phi\circ\sigma^{(n)}_i(s),s\phi)\le\delta,\,\forall s\in F\}$$$$\Map(\rho^{(n)},F,\delta,\sigma_i)=\{\phi:[d_i]\to X^n:\rho^{(n)}_2(\phi\circ\sigma_i(s),s\phi)\le\delta,\,\forall s\in F\}$$ can be regarded as compact subspaces of the product space $X^{nd_i}=(X^n)^{d_i}$. More explicitly, the point here is that we identify $X^{nd_i}$ with $(X^n)^{d_i}$. We notice that the construction of the sofic approximation sequence $\Sigma^{(n)}$ for $G$ and the definition of the product action $(X^n,G)$ ensure that the terms $\phi\circ\sigma^{(n)}_i(s)$ and $\phi\circ\sigma_i(s)$ agree, i.e. $$\phi\circ\sigma^{(n)}_i(s)=\phi\circ\sigma_i(s),\quad\;\forall\phi\in X^{nd_i}=(X^n)^{d_i},\;\forall s\in F.$$ Further, we also remark that $\rho_\infty$ defined on $X^{nd_i}$ corresponds to $\rho^{(n)}_\infty$ defined on $(X^n)^{d_i}$, namely $$\rho_\infty(\psi,\psi^\prime)=\rho^{(n)}_\infty(\psi,\psi^\prime),\quad\,\forall\psi,\psi^\prime\in X^{nd_i}=(X^n)^{d_i},$$ while $\rho_2$ defined on $X^{nd_i}$ and $\rho^{(n)}_2$ defined on $(X^n)^{d_i}$ satisfy the inequality: $$\frac{1}{\sqrt{n}}\cdot\rho^{(n)}_2(\psi,\psi^\prime)\le\rho_2(\psi,\psi^\prime)\le\rho^{(n)}_2(\psi,\psi^\prime),\quad\,\forall\psi,\psi^\prime\in X^{nd_i}=(X^n)^{d_i}.$$ The above observation implies that $$\Map(\rho^{(n)},F,\delta,\sigma_i)\subset\Map(\rho,F,\delta,\sigma^{(n)}_i)\subset\Map(\rho^{(n)},F,\sqrt{n}\delta,\sigma_i).$$ It follows that
\begin{align*}&\Widim_\epsilon\left(\Map(\rho^{(n)},F,\delta,\sigma_i),\rho^{(n)}_\infty\right)\\\le&\Widim_\epsilon\left(\Map(\rho,F,\delta,\sigma^{(n)}_i),\rho_\infty\right)\\\le&\Widim_\epsilon\left(\Map(\rho^{(n)},F,\sqrt{n}\delta,\sigma_i),\rho^{(n)}_\infty\right).\end{align*}

Since $\epsilon>0$, $\delta>0$, a finite subset $F\subset G$ and $i\in\mathbb{N}$ (which we took in the beginning of the proof) are arbitrary, we deduce that $$\mdim_{\Sigma^{(n)}}(X,G)=\frac1n\cdot\mdim_\Sigma(X^n,G).$$ Thus, we end the proof.
\end{proof}
\begin{remark}
The equality established in Lemma \ref{submain} is generally true for all sofic group actions $(X,G)$ and all positive integers $n$. The acting group $G$ in this lemma is not required to be infinite.
\end{remark}

\medskip

We are now able to prove Theorem \ref{main}. The key lemma (Lemma \ref{submain}) indicates that $$\mdim_{\Sigma^{(n)}}(X,G)=\frac1n\cdot\mdim_\Sigma(X^n,G).$$ By Lemma \ref{lithm}, sofic mean dimension (with respect to any sofic approximation sequence) will coincide with (classical) mean dimension, as the acting group $G$ is infinite. Thus, we conclude with $$\mdim(X,G)=\frac1n\cdot\mdim(X^n,G).$$

\medskip

\begin{remark}
We explain shortly about the difficulty with this problem. Let $G$ be an infinite amenable group which acts continuously on a compact metrizable space $X$. We fix a positive integer $n$ and a F\o lner sequence $\{F_k\}_{k\in\mathbb{N}}$ of $G$. We recall that $$\mdim(X^n,G)=\lim_{\epsilon\to0}\lim_{n\to\infty}\frac{\Widim_\epsilon(X^n,(\rho_\infty)_{F_k})}{|F_k|},$$$$n\cdot\mdim(X,G)=\lim_{\epsilon\to0}\lim_{n\to\infty}\frac{n\cdot\Widim_\epsilon(X,\rho_{F_k})}{|F_k|},$$ where $\rho$ and $\rho_\infty$ are compatible metrics on $X$ and $X^n$, respectively. To show $$\mdim(X^n,G)\ge n\cdot\mdim(X,G),$$ a main issue is how to estimate the term $n\cdot\Widim_\epsilon(X,\rho_{F_k})$ from above with terms such as some variants of $\Widim_\epsilon(X^n,(\rho_\infty)_{F_k})$. We overcome this obstacle. The strategy we adopted is to consider \textit{different} approximation sequences in the limits. For a systematic treatment we went through an approach of sofic mean dimension. To make it clearer, let us focus on the case of $\mathbb{Z}$-actions. More precisely, let $(X,d)$ be a compact metric space and $T:X\to X$ a homeomorphism. For convenience we change our notations here, which apply only to the remark. For every positive integer $k$ we write $d_k$ for the compatible metric on $X$ defined by $$d_k(x,x^\prime)=\max_{0\le i<k}d(T^ix,T^ix^\prime),\quad\forall\,x,x^\prime\in X.$$ For simplicity we assume $n=2$. The product action of $\mathbb{Z}$ on the product space $X\times X$ is denoted by $T\times T$. We take a compatible metric $d\times d$ on $X\times X$ as follows: $$d\times d\,((x_1,x_2),(x^\prime_1,x^\prime_2))=\max\{d(x_1,x^\prime_1),d(x_2,x^\prime_2)\},\,\quad\forall\,(x_1,x_2),\,(x^\prime_1,x^\prime_2)\;\in X\times X.$$ To estimate $2\cdot\mdim(X,T)$ from above, we have to turn to an alternative expression (replacing $N$ by $2N$ in the midst of the equality below): $$2\cdot\mdim(X,T)=\lim_{\epsilon\to0}\lim_{N\to+\infty}\frac{2\cdot\Widim_\epsilon(X,d_N)}{N}=\lim_{\epsilon\to0}\lim_{N\to+\infty}\frac{\Widim_\epsilon(X,d_{2N})}{N}.$$ Therefore, in order to show $$2\cdot\mdim(X,T)\le\mdim(X\times X,T\times T)=\lim_{\epsilon\to0}\lim_{N\to+\infty}\frac{\Widim_\epsilon(X\times X,(d\times d)_N)}{N}$$ it suffices to prove $$\Widim_\epsilon(X,d_{2N})\le\Widim_\epsilon(X\times X,(d\times d)_N)$$ for $\epsilon>0$ and $N\in\mathbb{N}$. This will be deduced from the following statement: The continuous mapping $$X\to X\times X,\quad x\mapsto(x,T^Nx)$$ is distance-increasing (actually it is distance-preserving) with respect to $(X,d_{2N})$ and $(X\times X,(d\times d)_N)$, i.e. $$d_{2N}(x,x^\prime)=\max_{0\le i\le2N-1}d(T^ix,T^ix^\prime)=(d\times d)_N\left((x,T^Nx),(x^\prime,T^Nx^\prime)\right),\quad\forall\,x,x^\prime\in X.$$
\end{remark}

\bigskip

\medskip


\begin{thebibliography}{9999999}

\bibitem[Li13]{Li}
Hanfeng Li.
Sofic mean dimension.
Advances in Mathematics 244 (2013), 570--604.

\bibitem[LW00]{LW}
Elon Lindenstrauss, Benjamin Weiss.
Mean topological dimension.
Israel Journal of Mathematics 115 (2000), 1--24.

\bibitem[Tsu19]{Tsukamoto}
Masaki Tsukamoto.
Mean dimension of full shifts.
Israel Journal of Mathematics 230 (2019), 183--193.

\end{thebibliography}
\end{document}